\documentclass[12pt]{amsart}
\usepackage{amssymb}
\usepackage{lscape}
\usepackage[headings]{fullpage}
\usepackage{amsfonts}
\usepackage{paralist}
\usepackage{xspace}
\usepackage{euscript}
\usepackage{color,tikz}
\usetikzlibrary{decorations.pathreplacing,calc}

\usepackage{epigraph}
\usepackage[all]{xy}
\usepackage[normalem]{ulem}
\usepackage[colorlinks,pagebackref=true]{hyperref}

\makeatletter
\@addtoreset{equation}{section}
\def\theequation{\thesection.\@arabic \c@equation}

\def\theenumi{\@roman\c@enumi}
\def\@citecolor{blue}
\def\@linkcolor{blue}
\def\@urlcolor{blue}

\makeatother

\newtheorem{lemma}[equation]{Lemma}

\newtheorem{claim*}{Claim}

\newtheorem{theorem}[equation]{Theorem}

\theoremstyle{definition}
\newtheorem{rmk}[equation]{Remark}
\newenvironment{remark}[1][]{%
    \begin{rmk}[#1] \pushQED{\qed}}{\popQED \end{rmk}}

\newtheorem{eg}[equation]{Example}

\newtheorem{definition}[equation]{Definition}

\newtheorem{notn}[equation]{Notation}

\def\<{\langle}
\def\>{\rangle}



\newcommand{\Tor}{\operatorname{Tor}}

\renewcommand{\to}{\longrightarrow}

\newcommand{\kk}{\mathbf{k}}

\newcommand{\bF}{\mathbf{F}}

\newcommand{\cO}{{\mathcal{O}}}
\newcommand{\PP}{\mathbb{P}}
\newcommand{\QQ}{\mathbb{Q}}

\newcommand{\VV}{\mathbf{V}}

\newcommand{\ZZ}{\mathbb{Z}}

\newcommand{\Sym}{\operatorname{Sym}} 

\newcommand{\defi}[1]{{\bfseries\upshape #1}}

\newcommand{\nb}{\overline{\beta}}

\title{The Betti table of a high degree curve is asymptotically pure}
\author{Daniel Erman}
  \address{Department of Mathematics, University of Wisconsin, Madison, WI, 53706}
   \email{{\tt derman@math.wisc.edu}}
\thanks{Research of the author partially supported by the Simons Foundation.}

\date{\today}


\begin{document}
\date{\today}
\maketitle

\begin{abstract}
We prove that asymptotically in the degree, the main term of the Boij--S\"oderberg decomposition of a high degree curve is a single pure diagram that only depends on the genus of the curve.  This answers a question of Ein and Lazarsfeld in the case of curves.
\end{abstract}

\vspace{.5cm}

\begin{center}
{Dedicated to Rob Lazarsfeld on the occasion of his
sixtieth birthday.}
\end{center}

\section*{Introduction}
Syzygies can encode subtle geometric information about an algebraic variety, with the most famous examples coming from the study of smooth algebraic curves.  Though little is known about the syzygies of higher dimensional varieties, Ein and Lazarsfeld have shown that at least the asymptotic behavior is uniform~\cite{ein-lazarsfeld-asymptotic}.  More precisely, given a projective variety $X\subseteq \PP^n$ embedded by the very ample bundle $A$, Ein and Lazarsfeld ask:  which graded Betti numbers are nonzero for $X$ reembedded by $dA$?  They prove that, asymptotically in $d$, the answer (or at least the main term of the answer) only depends on the dimension of $X$.

Boij--S\"oderberg theory~\cite{eis-schrey1} provides refined invariants of a graded Betti table, and it is natural to ask about the asymptotic behavior of these Boij--S\"oderberg decompositions.  In fact, this problem is explicitly posed by Ein and Lazarsfeld~\cite[Problem~7.4]{ein-lazarsfeld-asymptotic}, and we answer their question for smooth curves in Theorem~\ref{thm:main}.  

Fix a smooth curve $C$ and a sequence $\{A_d\}$ of increasingly positive divisors on $C$.  We show that, as $d\to \infty$, the Boij--S\"oderberg decomposition of the Betti table of $C$ embedded by $|A_d|$ is increasingly dominated by a single pure diagram that depends only on the genus of the curve.  The proof combines an explicit computation about the numerics of pure diagrams with known facts about when an embedded curve satisfies Mark Green's $N_p$-condition.

\section{Setup}
We work over an arbitrary field $\kk$.  Throughout, we will fix a smooth curve $C$ of genus $g$ and a sequence $\{A_d\}$ of line bundles of increasing degree.  Since we are interested in asymptotics, we assume that for all $d$, $\deg A_d \geq 2g+1$.
Let $r_d := \dim H^0(C,A_d)-1=\deg A_d-g$ so that the complete linear series $|A_d|$ embeds $C\subseteq \PP^{r_d}$. For each $d$, we consider the homogeneous coordinate ring $R(C,A_d):=\oplus_{e\geq 0} H^0(C,eA_d)$ of this embedding.  We may then consider $R(C,A_d)$ as a graded module over the polynomial ring $\Sym( H^0(C,A_d))$.  

If $\bF=[\bF_0\gets\bF_1\gets \dots \gets \bF_n\gets 0]$ 
is a minimal graded free resolution of $R(C,A_d)$, then we will use $\beta_{i,j}(\cO_C,A_d)$ to denote the
number of minimal generators of $\bF_i$ of degree $j$. Equivalently, we have
\[
\beta_{i,j}(\cO_C,A_d) = \dim_{\kk} \Tor_i^{\Sym( H^0(C,A_d))}(R(C,A_d),\kk)_j.
\]
We define the \defi{graded
Betti table} $\beta(\cO_C,A_d)$ as the vector with coordinates
$\beta_{i,j}(\cO_C,A_d)$ in the vector space $\VV = \bigoplus_{i=0}^n \bigoplus_{j\in
\ZZ}\QQ$.

We use the standard Macaulay2 notation for displaying Betti tables, where
\[
\beta=
\begin{pmatrix} \beta_{0,0} & \beta_{1,1} & \beta_{2,2} &\dots\\
\beta_{0,1} & \beta_{1,2} &\beta_{2,3}&\dots  \\
\beta_{0,2} &  \beta_{1,3}&\beta_{2,4}  &\dots  \\
\vdots &\vdots &\vdots & \ddots
\end{pmatrix}.
\]

Boij--S\"oderberg theory focuses on the rational cone spanned by all graded Betti tables in $\VV$.  The extremal rays of this cone correspond to certain \defi{pure diagrams}, and hence every graded Betti table can be written as a positive rational sum of pure diagrams; this decomposition is known as a \defi{Boij--S\"oderberg decomposition}.  For a good introduction to the theory, see either~\cite{eis-schrey-icm} or~\cite{floystad-expository}.  We introduce only the notation and results that we need.

For a given $d$ and some $i\in [0,g]$, we define the (degree) sequence $\mathbf e=\mathbf e(i,d):=(0,2,3,4,\dots, r_d-i,r_d-i+2,r_d-i+3,\dots, r_d+1)\in \ZZ^{r_d-1}$, and we define the pure diagram $\pi_{i,d} \in \VV$ by the formula:
\begin{equation}\label{eqn:piid}
\beta_{p,q}(\pi_{i,d})=\begin{cases}
(r_d-1)! \cdot \left( \prod_{\ell \ne p} \frac{1}{| \mathbf e_{\ell} - \mathbf e_p |} \right ) & \text{ if } p\in [0,r_d-1] \text{ and } q=\mathbf e_p\\
0 & \text{else}.
\end{cases}
\end{equation}
Note that the shape of $\pi_{i,d}$ is the following, where $*$ indicates a nonzero entry:
\[
\pi_{i,d} = 
 \bordermatrix{
\ &0&1&\dots&r_d-i-1&r_d-i&\dots&r_d-1\cr
\ &*&0 &\dots&0&0 &\dots&0\cr
\ &0&*&\dots &*&0&\dots &0\cr
\ &0&0&\dots &0&*&\dots &*\cr
}.
\]
It turns out that these are the only pure diagrams that appear in the Boij--S\"odergberg decomposition of the Betti tables $\beta(C,A_d)$ (see Lemma~\ref{lem:pures} below).

We next recall the notion of a {(reduced) Hilbert numerator}, which will be central to our proof.  If $S=\kk[x_0,\dots, x_n]$ is a polynomial ring, and $M$ is a graded $S$-module, then the \defi{Hilbert series} of a finitely generated, graded module $M$ is the power series $\operatorname{HS}_M(t):=\sum_{i\in \ZZ} \dim_{\kk} M_i\cdot t^i \in \QQ[[t]]$.  The Hilbert series can be written uniquely as a rational function of the form
\[
\operatorname{HS}_M(t) = \frac{\operatorname{HN}_M(t)}{(1-t)^{\dim M}}
\]
and we define the \defi{Hilbert numerator} of $M$ as the polynomial $\operatorname{HN}_M(t)$.  The \defi{multiplicity} of $M$ is $\operatorname{HN}_M(1)$.

As is standard in Boij--S\"oderberg theory, we allow formal rescaling of Betti tables by rational numbers.  Note the Hilbert numerator is invariant under modding out by a regular linear form or adjoining an extra variable; also, the Hilbert numerator is computable entirely in terms of the graded Betti table (see~\cite[\S1]{eis-syzygies}).  Similar statements hold for the codimension of a module.  Thus we may and do formally extend the notions of Hilbert numerator, codimension, and multiplicity to all elements of the vector space $\VV$.

\begin{lemma}\label{lem:mult1}
For any $i,d$, the diagram $\pi_{i,d}$ has multiplicity $1$.
\end{lemma}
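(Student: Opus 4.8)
The plan is to evaluate the multiplicity $\operatorname{HN}_{\pi_{i,d}}(1)$ directly from the Herzog--Kühl shape of the entries in \eqref{eqn:piid}. Write $c := r_d-1$ for the codimension of $\pi_{i,d}$, and note there are exactly $c+1$ distinct degrees $\mathbf e_0<\mathbf e_1<\dots<\mathbf e_c$, one in each homological slot. Form the polynomial
\[
K(t) := \sum_{p=0}^{c} (-1)^p\, \beta_{p,\mathbf e_p}(\pi_{i,d})\, t^{\mathbf e_p},
\]
the alternating sum of the single nonzero Betti number in each homological degree, weighted by $t$ to its internal degree. Since the Hilbert numerator is computable from the Betti table, we have $\operatorname{HN}_{\pi_{i,d}}(t)=K(t)/(1-t)^{c}$; in particular $(1-t)^c\mid K(t)$, and the multiplicity is $\operatorname{HN}_{\pi_{i,d}}(1)=\lim_{t\to 1}K(t)/(1-t)^{c}$.

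Because $K$ vanishes to order at least $c$ at $t=1$, Taylor expanding about $t=1$ (equivalently, $c$ applications of L'Hôpital) gives $\operatorname{HN}_{\pi_{i,d}}(1)=(-1)^{c}K^{(c)}(1)/c!$. Differentiating $c$ times and setting $t=1$ replaces each monomial $t^{\mathbf e_p}$ by the falling factorial $(\mathbf e_p)_c:=\mathbf e_p(\mathbf e_p-1)\cdots(\mathbf e_p-c+1)$, so
\[
\operatorname{HN}_{\pi_{i,d}}(1)=\frac{(-1)^{c}}{c!}\sum_{p=0}^{c}(-1)^p\,\beta_{p,\mathbf e_p}(\pi_{i,d})\,(\mathbf e_p)_c .
\]

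Next I would substitute \eqref{eqn:piid}. Comparing the absolute values there with the oriented product $\prod_{\ell\ne p}(\mathbf e_p-\mathbf e_\ell)$, which has exactly $c-p$ negative factors (those with $\ell>p$), one gets $(-1)^p\beta_{p,\mathbf e_p}(\pi_{i,d})=c!\,(-1)^{c}\big/\prod_{\ell\ne p}(\mathbf e_p-\mathbf e_\ell)$, using $(r_d-1)!=c!$. The two signs $(-1)^c$ cancel and the factorials cancel, collapsing the sum to
\[
\operatorname{HN}_{\pi_{i,d}}(1)=\sum_{p=0}^{c}\frac{(\mathbf e_p)_c}{\prod_{\ell\ne p}(\mathbf e_p-\mathbf e_\ell)} .
\]
This is the divided difference of the polynomial $x\mapsto (x)_c$ at the nodes $\mathbf e_0,\dots,\mathbf e_c$. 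Since $(x)_c$ is monic of degree $c$, its divided difference at $c+1$ distinct nodes is its leading coefficient, namely $1$. Concretely, the Lagrange-interpolation identity says $\sum_p x_p^{k}\big/\prod_{\ell\ne p}(x_p-x_\ell)$ equals $0$ for $0\le k<c$ and $1$ for $k=c$; applying it to the monomial expansion of $(x)_c$ kills every lower term and leaves the contribution $1$ of the top monomial. Hence $\operatorname{HN}_{\pi_{i,d}}(1)=1$.

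The only step requiring genuine care is the sign bookkeeping of the middle two displays---matching the $(-1)^p$ with the orientation of $\prod_{\ell\ne p}(\mathbf e_p-\mathbf e_\ell)$ against the absolute values in \eqref{eqn:piid}---together with a clean invocation of the divided-difference (Lagrange) identity; the rest is a routine evaluation. It is worth emphasizing that the particular values $\mathbf e_p$, and hence $i$, never enter the final computation: all that is used is that the degrees are distinct and number $c+1$, which is exactly why every $\pi_{i,d}$ has multiplicity $1$.
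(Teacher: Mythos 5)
Your proof is correct, but it takes a genuinely different route from the paper. The paper's argument is a two-step citation: it invokes \cite[Theorem~0.1]{eis-schrey1} to realize $\pi_{i,d}$ (up to positive scalar) as the Betti table of an actual Cohen--Macaulay module, and then applies the Huneke--Miller multiplicity formula for pure resolutions, which gives multiplicity $\beta_{0,0}(\pi_{i,d})\cdot\prod_{\ell\ne 0}\mathbf e_\ell/(r_d-1)!=1$ essentially by inspection, since $\mathbf e_0=0$. You instead compute $\operatorname{HN}_{\pi_{i,d}}(1)$ directly from the alternating-sum polynomial $K(t)$, reducing it via $c$-fold differentiation at $t=1$ and the sign bookkeeping of $\prod_{\ell\ne p}(\mathbf e_p-\mathbf e_\ell)$ to the divided difference of the monic polynomial $(x)_c$ at the $c+1$ nodes $\mathbf e_p$, which the Lagrange identity evaluates to $1$. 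In effect you re-prove the Huneke--Miller formula in this special case; your computation is longer but entirely formal, working with $\pi_{i,d}$ as an element of $\VV$ without ever needing the Eisenbud--Schreyer realizability theorem, and the same Lagrange identity applied to $(x)_k$ for $k<c$ also justifies your assertion that $K$ vanishes to order (exactly) $c$ at $t=1$, i.e.\ that the formal codimension is $r_d-1$ --- a point worth making explicit, since otherwise the divisibility $(1-t)^c\mid K(t)$ is being quoted rather than checked. All the individual steps (the formula $\operatorname{HN}=K(t)/(1-t)^c$, the identity $(-1)^p\beta_{p,\mathbf e_p}=c!\,(-1)^c/\prod_{\ell\ne p}(\mathbf e_p-\mathbf e_\ell)$, and the divided-difference evaluation) check out.
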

\begin{proof}
By~\eqref{eqn:piid} we have $\beta_{0,0}(\pi_{i,d})=\frac{(r_d-1)!}{2\cdot 3\cdots (r_d-i)\cdot (r_d-i+2)\cdots (r_d+1)}$.  Up to a positive scalar multiple, the diagram $\pi_{i,d}$ equals the graded Betti table of a Cohen--Macaulay module by~\cite[Theorem~0.1]{eis-schrey1}.  Then by Huneke and Miller's multiplicity computation for Cohen--Macaulay modules with a pure resolution\footnote{Strictly speaking, Huneke and Miller's computation is for graded algebras.  But by including a $\beta_{0,0}$ factor, the argument goes through unchanged for a graded Cohen--Macaulay module generated in degree $0$ and with a pure resolution.}~\cite[Proof of Theorem~1.2]{huneke-miller}, it follows that the multiplicity of $\pi_{i,d}$ equals
\begin{align*}
\beta_{0,0}(\pi_{i,d}) \cdot \frac{2\cdot 3\cdots (r_d-i)\cdot (r_d-i+2)\cdots (r_d+1)}{(r_d-1)!}&=\beta_{0,0}(\pi_{i,d})  \cdot (\beta_{0,0}(\pi_{i,d}) )^{-1} \\
&=1.
\end{align*}
\end{proof}

\section{Main result and proof}
To make sensible comparisons between the graded Betti tables $\beta(\cO_C,A_d)$ for different values of $d$, we will rescale by the degree of the curve so that we are always considering Betti tables of (formal) multiplicity equal to $1$.  Namely, we define
\[
\overline{\beta}(\cO_C,A_d) := \tfrac{1}{\deg A_d}\cdot \beta(\cO_C,A_d).
\]
The Boij--S\"oderberg decomposition of $\nb(\cO_C,A_d)$ has a relatively simple form.
\begin{lemma}\label{lem:pures}
For any $d$, the Boij--S\"oderberg decomposition of $\nb(\cO_C,A_d)$ has the form
\begin{equation}\label{eqn:main expression}
\nb(\cO_C,A_d) = \sum_{i=0}^{g} c_{i,d} \cdot \pi_{i,d},
\end{equation}
where $c_{i,d}\in \QQ_{\geq 0}$ and $\sum_i c_{i,d}=1$.
\end{lemma}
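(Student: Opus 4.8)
The plan is to pin down the shape of the Betti table $\beta(\cO_C,A_d)$ precisely enough that the only degree sequences compatible with its support are the $\mathbf e(i,d)$ with $0\le i\le g$, and then to read off the coefficient sum from the additivity of multiplicity.

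First I would recall the standard facts about the embedding. Since $\deg A_d\ge 2g+1$, the ring $R(C,A_d)$ is projectively normal and arithmetically Cohen--Macaulay of codimension $r_d-1$ in $\Sym(H^0(C,A_d))$, and the embedding is $2$-regular, i.e.\ $\reg R(C,A_d)\le 2$. Consequently the only nonzero entries of $\beta(\cO_C,A_d)$ lie in the three rows $q-p\in\{0,1,2\}$, and because $C$ is nondegenerate the row $q=p$ contributes only the single entry $\beta_{0,0}=1$. A strictly increasing degree sequence supported on this table must therefore start at $0$, lie in row $q=p+1$ for a while, and then jump once to row $q=p+2$ and stay there (once $q_p=p+2$, strict monotonicity forces $q_{p+1}=p+3$). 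These are exactly the sequences $\mathbf e(i,d)$, indexed by the homological degree $r_d-i$ at which the jump occurs.

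The crux is to bound where the jump can happen, i.e.\ to locate the support of the quadratic row $\beta_{p,p+2}$. Here I would invoke Green's theorem on property $N_p$: a line bundle of degree $\ge 2g+1+p$ satisfies $N_p$. Writing $\deg A_d=r_d+g$, the hypothesis $\deg A_d\ge 2g+1$ lets us take $p=\deg A_d-2g-1=r_d-g-1$, so $(C,A_d)$ satisfies $N_{r_d-g-1}$; in the presence of $\reg\le 2$ this says precisely that $\beta_{p,p+2}(\cO_C,A_d)=0$ for all $p\le r_d-g-1$. Hence the quadratic row is supported only in homological degrees $p\in[r_d-g,\,r_d-1]$, so the jump in any supported degree sequence occurs at some $p=r_d-i\ge r_d-g$, forcing $0\le i\le g$. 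I expect this numerical matching---that the minimal hypothesis $\deg A_d\ge 2g+1$ is exactly what makes Green's bound reach $p=r_d-g-1$---to be the main point of the argument (and note it uniformly covers $g=0$, where the quadratic row is forced to be empty).

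With the shape understood, the Boij--S\"oderberg decomposition is immediate. Every degree sequence appearing in the decomposition has, in each homological degree, a coordinate at which the table being resolved is nonzero (the greedy algorithm only ever selects degrees present in the shrinking remainder), hence a coordinate in the support of $\beta(\cO_C,A_d)$; by the previous paragraph these sequences lie among $\{\mathbf e(i,d):0\le i\le g\}$, and the decomposition reads $\nb(\cO_C,A_d)=\sum_{i=0}^g c_{i,d}\,\pi_{i,d}$ with $c_{i,d}\in\QQ_{\ge 0}$. Finally, multiplicity is a linear functional on $\VV$, each $\pi_{i,d}$ has multiplicity $1$ by Lemma~\ref{lem:mult1}, and $\nb(\cO_C,A_d)$ has multiplicity $\tfrac{1}{\deg A_d}\cdot e(R(C,A_d))=\tfrac{1}{\deg A_d}\cdot \deg A_d=1$, since the degree of the embedded curve is $\deg A_d$. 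Equating multiplicities on both sides of the decomposition gives $\sum_{i=0}^g c_{i,d}=1$, establishing \eqref{eqn:main expression}.
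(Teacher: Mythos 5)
Your proposal is correct and follows essentially the same route as the paper: Cohen--Macaulayness plus Green's $N_{r_d-g-1}$ theorem to pin the Betti table to the shape whose only compatible degree sequences are $\mathbf e(i,d)$ for $0\le i\le g$, followed by equating multiplicities (each $\pi_{i,d}$ having multiplicity $1$ by Lemma~\ref{lem:mult1}) to get $\sum_i c_{i,d}=1$. The extra detail you supply --- the explicit regularity bound, the location of the jump in the degree sequence, and the direct computation of the multiplicity of $\nb(\cO_C,A_d)$ as $\tfrac{1}{\deg A_d}\cdot\deg A_d$ --- merely fleshes out steps the paper treats as standard.
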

The above lemma shows that the number of potential pure diagrams appearing in the decomposition of $\nb(C,A_d)$ is at most $g+1$.  Note that the precise number of summands with a nonzero coefficient is closely related to Green and Lazarsfeld's Gonality Conjecture~\cite[Conjecture~3.7]{green-lazarsfeld}, and hence will vary even among curves of the same genus.   However our main result, which we now state, shows that this variance plays a minor role in the asymptotics:
\begin{theorem}\label{thm:main}
The Betti table $\nb(\cO_C,A_d)$ converges to the pure diagram $\pi_{g,d}$ in the sense that
\[
c_{i,d}\to \begin{cases} 0 & i\ne g \\1 & i = g  \end{cases} \qquad \text{ as \qquad $d\to \infty$}.
\]
\end{theorem}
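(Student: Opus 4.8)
The plan is to pin down the asymptotics of the coefficients $c_{i,d}$ by combining the two constraints already furnished by Lemma~\ref{lem:pures}, namely $c_{i,d}\geq 0$ and $\sum_i c_{i,d}=1$, with \emph{one} further linear relation extracted from a well-chosen linear functional on $\VV$. The natural candidate is the coefficient of $t^2$ in the reduced Hilbert numerator. All of the tables appearing in \eqref{eqn:main expression}---both $\nb(\cO_C,A_d)$ and each $\pi_{i,d}$---have the same codimension $r_d-1$, and on this fixed-codimension locus the assignment of a table to its Hilbert numerator $\operatorname{HN}(t)$ is linear (it is $(1-t)^{-(r_d-1)}$ times the manifestly linear $K$-polynomial $\sum_{p,q}(-1)^p\beta_{p,q}t^q$). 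Hence applying it to \eqref{eqn:main expression} gives the identity of quadratic polynomials $\operatorname{HN}_{\nb(\cO_C,A_d)}(t)=\sum_{i=0}^g c_{i,d}\,\operatorname{HN}_{\pi_{i,d}}(t)$, whose $t^2$-coefficients I will compare.

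First I would compute the left-hand side. Since $\deg A_d\geq 2g+1$, the ring $R(C,A_d)$ is projectively normal with Hilbert function $e\mapsto e\deg A_d-g+1$ for $e\geq 1$, so a direct summation gives $\operatorname{HN}_{\beta(\cO_C,A_d)}(t)=1+(r_d-1)t+g\,t^2$; after rescaling by $1/\deg A_d$ the coefficient of $t^2$ in $\operatorname{HN}_{\nb(\cO_C,A_d)}$ is $g/\deg A_d$. Next comes the heart of the argument: the explicit evaluation of the $t^2$-coefficient of $\operatorname{HN}_{\pi_{i,d}}$. Expanding $(1-t)^{-(r_d-1)}$ and reading off the coefficient of $t^2$ shows it equals $\beta_{0,0}(\pi_{i,d})\binom{r_d}{2}-\beta_{1,2}(\pi_{i,d})$, since only the degrees $\mathbf e_0=0$ and $\mathbf e_1=2$ can contribute. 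Using \eqref{eqn:piid} I would evaluate the two telescoping products to get $\beta_{0,0}(\pi_{i,d})=\tfrac{r_d-i+1}{r_d(r_d+1)}$ and $\beta_{1,2}(\pi_{i,d})=\tfrac{r_d-i-1}{2}$, after which the two terms almost entirely cancel and leave the clean value $i/(r_d+1)$.

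With both computations in hand, matching $t^2$-coefficients yields $\frac{g}{\deg A_d}=\frac{1}{r_d+1}\sum_{i=0}^g i\,c_{i,d}$, that is, $\sum_{i=0}^g i\,c_{i,d}=\frac{g(r_d+1)}{r_d+g}$. Subtracting this from $g=g\sum_i c_{i,d}$ gives the key inequality
\[
\sum_{i=0}^{g}(g-i)\,c_{i,d}=\frac{g(g-1)}{r_d+g}.
\]
Every summand on the left is nonnegative, so for each $i<g$ we obtain $c_{i,d}\leq \frac{g(g-1)}{r_d+g}\to 0$ as $d\to\infty$; then $c_{g,d}=1-\sum_{i<g}c_{i,d}\to 1$, which is exactly the claim.

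I expect the single genuine obstacle to lie in the middle paragraph's computation of the $t^2$-coefficient of $\operatorname{HN}_{\pi_{i,d}}$: everything hinges on the factorial products collapsing to the remarkably simple expression $i/(r_d+1)$, and getting the telescoping of $\prod_{\ell\ne p}|\mathbf e_\ell-\mathbf e_p|$ right for $p=0$ and $p=1$ is where care is needed. Note that Green's $N_p$-theorem is not invoked again here---it is already absorbed into Lemma~\ref{lem:pures}, which restricts the decomposition to the $g+1$ diagrams $\pi_{i,d}$. As a consistency check, the formula specializes correctly: for $g=1$ it forces $c_{0,d}=0$ and $c_{1,d}=1$ for every $d$, and for $g=0$ it is vacuous with $c_{0,d}=1$.
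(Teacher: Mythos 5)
Your proof is correct and follows essentially the same route as the paper: both extract the single extra linear relation by comparing $t^2$-coefficients of the Hilbert numerator across the decomposition \eqref{eqn:main expression}, compute $\beta_{0,0}(\pi_{i,d})$ and the last (or here, $\beta_{1,2}$) entry to get the coefficient $i/(r_d+1)$, and then combine with $c_{i,d}\geq 0$ and $\sum_i c_{i,d}=1$. The only difference is cosmetic but pleasant: where the paper multiplies by $r_d$ and passes to the limit with error terms $\epsilon_{j,i,d},\delta_{j,d}$, you keep the identity exact and obtain the explicit rate $\sum_{i<g}(g-i)c_{i,d}=\tfrac{g(g-1)}{r_d+g}$.
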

\noindent In particular, the limiting pure diagram only depends on the genus of the curve.  A nearly equivalent statement of the theorem is: asymptotically in $d$, the main term of the Boij--S\"oderberg decomposition of the (unscaled) Betti table $\beta(C,A_d)$ is the $\pi_{g,d}$ summand. 

\begin{proof}[Proof of Lemma~\ref{lem:pures}]
Since the homogeneous coordinate ring of $C\subseteq \PP^{r_d}$ is Cohen--Macaulay (see ~\cite[~\S8A]{eis-syzygies} for a proof and the history of this fact), it follows from \cite[Theorem~0.2]{eis-schrey1} that $\nb(\cO_C,A_d)$ can be written as a positive rational sum of pure diagrams of codimension $r_d-1$.
Since $C\subseteq \PP^{r_d}$ satisfies the $N_p$ condition for $p=r_d-g-1$ by~\cite[Theorem~4.a.1]{green-koszul-1}, it follows that the shape of $\beta(\cO_C,A_d)$ is:
\[
 \bordermatrix{
\ &0&1&2&\dots &r_d-g-1&r_d-g&\dots & r_d-1\cr
\ &*&-&\dots &-&-&-&\dots &-\cr
\ &-&*&*&\dots &*&*&\dots&*\cr
\ &-&-&-&\dots &-&*&\dots& *\cr
}.
\]
Thus the pure diagrams $\pi_{i,d}$ for $i=0,1,\dots, g$ are the only diagrams that can appear in the Boij--S\"oderberg decomposition of $\nb(C,A_d)$, and so we may write
\[
\nb(\cO_C,A_d) = \sum_{i=0}^{g} c_{i,d} \cdot \pi_{i,d}
\]
with $c_{i,d}\in \QQ_{\geq 0}$.  The (formal) multiplicity of $\nb(C,A_d)$ is $1$ by construction, and the same holds for the $\pi_{i,d}$ by Lemma~\ref{lem:mult1}, so it follows that $\sum c_{i,d}=1$.
\end{proof}

\begin{lemma}
The Hilbert numerator of the pure diagram $\pi_{i,d}$ is
\[
\operatorname{HN}_{\pi_{i,d}}(t)=\left(\frac{r_d-i+1}{r_d(r_d+1)}\right) t^0+ \left(\frac{(r_d-1)(r_d-i+1)}{r_d(r_d+1)}\right) t^1 + \left(\frac{i}{r_d+1}\right) t^2.
\]
The Hilbert numerator of the rescaled Betti table $\nb(\cO_C,A_d)$ is
\[
\left(\frac{1}{r_d+g}\right) t^0+ \left(\frac{r_d-1}{r_d+g}\right) t^1 + \left(\frac{g}{r_d+g}\right) t^2.
\]
\end{lemma}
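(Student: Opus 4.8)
The plan is to compute both Hilbert numerators from first principles: the first from the combinatorics of the pure diagram $\pi_{i,d}$, and the second directly from Riemann--Roch. For the pure diagram, recall that the numerator of the (unreduced) Hilbert series of a table $\beta\in\VV$ is the alternating sum $K(t)=\sum_{p,q}(-1)^p\beta_{p,q}t^q$, and that $\operatorname{HN}(t)=K(t)/(1-t)^{\codim}$ (this is the computation of $\operatorname{HN}$ from the Betti table recorded in \cite[\S1]{eis-syzygies}). For $\pi_{i,d}$ the codimension is $r_d-1$, and since every entry of the degree sequence $\mathbf e$ lies in $[0,r_d+1]$ we have $\deg K\le r_d+1$; hence $\operatorname{HN}_{\pi_{i,d}}(t)$ is a polynomial of degree at most $2$, which I write as $a_0+a_1t+a_2t^2$. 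Rather than dividing out $(1-t)^{r_d-1}$ explicitly, I would pin down the three coefficients by three linear relations.

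The three relations I would use are as follows. First, $a_0=K(0)=\beta_{0,0}(\pi_{i,d})$, and the proof of Lemma~\ref{lem:mult1} already evaluates this to $\tfrac{r_d-i+1}{r_d(r_d+1)}$. Second, the degree sequence $\mathbf e=(0,2,3,\dots)$ contains no entry equal to $1$, so the coefficient of $t$ in $K$ vanishes; comparing coefficients of $t^1$ on both sides of $K(t)=(1-t)^{r_d-1}(a_0+a_1t+a_2t^2)$ forces $a_1-(r_d-1)a_0=0$, i.e. $a_1=(r_d-1)a_0$, which simplifies to the stated middle coefficient. Third, evaluating at $t=1$ gives $a_0+a_1+a_2=\operatorname{HN}_{\pi_{i,d}}(1)$, which equals the multiplicity $1$ by Lemma~\ref{lem:mult1}; this determines $a_2=1-r_d\,a_0=\tfrac{i}{r_d+1}$. (Equivalently $a_2$ is the leading Betti number $\beta_{r_d-1,r_d+1}(\pi_{i,d})$, but the vanishing-of-$t^1$ relation sidesteps the telescoping product in that computation.)

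For the second formula, I would compute $\operatorname{HN}_{R(C,A_d)}(t)$ directly and then rescale. Since $\deg A_d\ge 2g+1$, for every $e\ge 1$ the bundle $eA_d$ has degree exceeding $2g-2$, so $H^1(C,eA_d)=0$ and Riemann--Roch gives $\dim H^0(C,eA_d)=e\deg A_d+1-g$; together with $\dim H^0(C,\cO_C)=1$ this determines the Hilbert function in every degree. Summing the resulting series (using $\sum_{e\ge1}e\,t^e=t/(1-t)^2$ and $\sum_{e\ge1}t^e=t/(1-t)$) and clearing denominators yields $\operatorname{HS}_{R(C,A_d)}(t)=\bigl(1+(\deg A_d-g-1)t+g\,t^2\bigr)/(1-t)^2$, so $\operatorname{HN}_{R(C,A_d)}(t)=1+(r_d-1)t+g\,t^2$ after substituting $\deg A_d=r_d+g$. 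Finally, $\operatorname{HN}$ is linear on tables of fixed codimension and $\nb(\cO_C,A_d)=\tfrac{1}{\deg A_d}\beta(\cO_C,A_d)$, so dividing through by $\deg A_d=r_d+g$ produces the claimed expression.

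This lemma is a bookkeeping computation rather than a conceptual step, so I do not expect a genuine obstacle; the only points needing care are (a) justifying that $\operatorname{HN}_{\pi_{i,d}}$ is honestly a polynomial of degree at most $2$, which rests on $\pi_{i,d}$ being a positive multiple of a Cohen--Macaulay Betti table of codimension $r_d-1$ (as invoked in Lemma~\ref{lem:mult1}), so that $(1-t)^{r_d-1}$ divides $K(t)$, and (b) the factorial cancellations in evaluating $a_0$ and in the final simplifications, which are routine. Note also that the two formulas are designed to feed the main theorem: equating $\operatorname{HN}_{\nb}=\sum_i c_{i,d}\operatorname{HN}_{\pi_{i,d}}$ and matching the $t^2$ coefficients will give $\sum_i i\,c_{i,d}=\tfrac{g(r_d+1)}{r_d+g}\to g$, which together with $\sum_i c_{i,d}=1$ and $0\le i\le g$ forces $c_{g,d}\to 1$.
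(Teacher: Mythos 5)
Your proposal is correct, and it reaches the same three-linear-conditions endgame as the paper while making different tactical choices at two points. For $\operatorname{HN}_{\pi_{i,d}}$, the paper justifies the degree bound by Artinian reduction (a finite-length module whose Betti table has two rows has regularity $2$), reads off the $t^0$ and $t^2$ coefficients as the first and last Betti numbers $\beta_{0,0}(\pi_{i,d})$ and $\beta_{r_d-1,r_d+1}(\pi_{i,d})$, and recovers the $t^1$ coefficient from the multiplicity; you instead get the degree bound from $\deg K\le r_d+1$ together with divisibility of $K$ by $(1-t)^{r_d-1}$, use the absence of a degree-$1$ entry in $\mathbf e$ to force $a_1=(r_d-1)a_0$, and recover $a_2$ from the multiplicity. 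Your route buys you a way around evaluating the telescoping product for $\beta_{r_d-1,r_d+1}(\pi_{i,d})$, at the cost of one coefficient comparison in $K(t)=(1-t)^{r_d-1}\operatorname{HN}(t)$; both arguments lean on Lemma~\ref{lem:mult1} for exactly one of the three coefficients. For $\nb(\cO_C,A_d)$, the divergence is more genuine: the paper again reads the $t^0$ and $t^2$ coefficients off the ends of the Betti table (citing \cite[\S8A]{eis-syzygies} for $\beta_{r_d-1,r_d+1}=g$, which rests on the Cohen--Macaulay property and duality) and uses multiplicity for the middle term, whereas you compute the full Hilbert series of the section ring from Riemann--Roch and the vanishing $H^1(C,eA_d)=0$ for $e\ge 1$, which is more elementary and self-contained and does not need the last Betti number at all. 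All of your individual steps check out, including the identities $a_1-(r_d-1)a_0=0$ and $a_2=1-r_d a_0=\tfrac{i}{r_d+1}$, and your closing remark about how the $t^2$ coefficients feed Theorem~\ref{thm:main} matches the paper's argument.
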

\begin{proof}
We prove the first statement by direct computation.  Since $\pi_{i,d}$ represents, up to scalar multiple, the Betti table of a Cohen--Macaulay module $M$, we may assume by Artinian reduction that the module $M$ has finite length.  For a finite length module, the Hilbert numerator equals the Hilbert series.  Since the Betti table $\pi_{i,d}$ has $2$ rows, it follows that the Castelnuovo--Mumford regularity of $M$ equals $2$ (except for $\pi_{0,d}$ which has regularity $1$).  The coefficient of $t^0$ is thus the value of the Hilbert function in degree $0$, which is the $0$th Betti number of the pure diagram $\pi_{i,d}$.  By \eqref{eqn:piid}, this equals
\begin{align*}
\beta_{0,0}(\pi_{i,d}) &=\frac{(r_d-1)!}{2\cdot 3\cdots (r_d-i)\cdot (r_d-i+2)\cdots (r_d+1)}=\frac{r_d+1-i}{r_d(r_d+1)}.
\end{align*}
Similarly, the coefficient of $t^2$ is given by the bottom-right Betti number of $\pi_{i,d}$ which is
\[
\beta_{r_d-1,r_d+1}(\pi_{i,d}) = \frac{i}{r_d+1}.
\]

Finally, since $\pi_{i,d}$ has multiplicity $1$ by Lemma~\ref{lem:mult1}, it follows that $\operatorname{HN}_{\pi_{i,d}}(1)=1$ and hence the coefficient of $t^1$ equals $1$ minus the coefficients of $t^0$ and $t^2$:
\begin{align*}
1- \left(\frac{r_d+1-i}{r_d(r_d+1)}\right) - \left(\frac{i}{r_d+1} \right)
&= \frac{r_d(r_d+1) - (r_d-i+1) - i\cdot r_d}{r_d(r_d+1)}\\
&=\frac{(r_d-1)(r_d-i+1)}{r_d(r_d+1)}.
\end{align*}

For the Hilbert numerator of $\nb(\cO_C,A_d)$ statement, we note that $\deg A_d=r_d+g$, yielding
\[
\nb(\cO_C,A_d) = \tfrac{1}{r_d+g} \cdot \beta(\cO_C,A_d).
\]
As above, we can compute the $t^0$ and $t^2$ coefficients via the first and last entries in the Betti table, and these are thus $\frac{1}{r_d+g}$ and $\frac{g}{r_d+g}$ respectively (see~\cite[\S8A]{eis-syzygies}, for instance).  Since $\nb(\cO_C,A_d)$ has multiplicity $1$, the $t^1$ coefficient is again $1$ minus the $t^0$ and $t^2$ coefficients, and the statement follows immediately.
\end{proof}

\begin{proof}[Proof of Theorem~\ref{thm:main}]
Note that $r_d\to \infty$ as $d\to \infty$.  We rewrite the Hilbert numerator of $\pi_{i,d}$ as
\[
\operatorname{HN}_{\pi_{i,d}}(t) = \left(\frac{1}{r_d} + \epsilon_{0,i,d}\right) t^0+ \left(1 + \frac{1}{r_d} + \epsilon_{1,i,d}\right) t^1 + \left(\frac{i}{r_d} + \epsilon_{2,i,d}\right) t^2,
\]
where $r_d\epsilon_{j,i,d}\to 0$ as $d\to \infty$ for all $j=0,1,2$ and $i=0,\dots, g$.  For instance
\[
\epsilon_{0,i,d} = \frac{r_d-i+1}{r_d(r_d+1)} - \frac{1}{r_d}=\frac{-i}{r_d(r_d+1)}.
\]
We may similarly rewrite the Hilbert numerator of $\nb(\cO_C,A_d)$ as
\[
\left(\tfrac{1}{r_d} + \delta_{0,d}\right)t^0 + \left(1-\tfrac{g+1}{r_d} + \delta_{1,d}\right)t^1+\left(\tfrac{g}{r_d} + \delta_{2,d}\right)t^2
\]
where for $j=0,1,2$ we have $r_d\delta_{j,d}\to 0$ as $d\to \infty$.

Since the Hilbert numerator is additive with respect to the Betti table decomposition of~\eqref{eqn:main expression}, combining the above computations with our Boij--S\"oderberg decomposition from~\eqref{eqn:main expression}, we see that the $t^2$ coefficient of the Hilbert numerator of $\nb(\cO_C,A_d)$ may be written as
\[
 \tfrac{g}{r_d}+\delta_{2,d} = \sum_{i=0}^g c_{i,d}\cdot \left(\tfrac{i}{r_d}+\epsilon_{2,i,d}\right).
\]
We multiply through by $r_d$ and take the limit as $d\to \infty$.  Since $r_d\delta_{j,d}$ and $r_d\epsilon_{j,i,d}$ both go to $0$ as $d\to \infty$, this yields:
\[
g = \lim_{d\to \infty} \sum_{i=0}^g c_{i,d}\cdot i.
\]
But $c_{i,d}\geq 0$ and $\sum_{i} c_{i,d}=1$.  Hence, as $d\to \infty$, we obtain $c_{i,d}\to 0$ for all $i\ne g$ and $c_{g,d}\to 1$.
\end{proof}

\begin{remark}
 If $X$ is a variety with $\dim X>1$, then our argument fails in several important ways.  To begin with, Ein and Lazarsfeld's nonvanishing syzygy results from~\cite{ein-lazarsfeld-asymptotic} show that the number of potential pure diagrams for the Boij--S\"oderberg decomposition of $\beta(X,A_d)$ is unbounded.  
 
 Moreover, in the case of curves, the Hilbert numerator of the embedded curves converged to the Hilbert numerator of one of the potential pure diagrams; the $N_p$ condition then implied that this pure diagram had the largest degree sequence of any potential pure diagram.  Our result then followed by the semicontinuous behavior of the Hilbert numerators of pure diagrams (for a related semicontinuity phenomenon see~\cite[Monotonicity Principle, p.~758]{ees-filtering}).   Ein and Lazarsfeld's asymptotic nonvanishing results imply that, even for $\PP^2$, the limit of the Hilbert numerator will fail to correspond to an extremal potential pure diagram, and so the semicontinuity does not obviously help.
\end{remark}

\section*{Acknowledgments}
The questions considered in this paper arose in conversations with Rob Lazarsfeld, and in addition I learned a tremendous amount about these topics from him; it is a great pleasure to thank Rob Lazarsfeld for his influence and his superb mentoring.  I also thank Lawrence Ein and David Eisenbud for helpful insights and conversations related to this paper.  I thank Christine Berkesch, Frank-Olaf Schreyer, and the referee for comments that improved this paper.



\begin{thebibliography}{13}


\bibitem{ein-erman-lazarsfeld}
L.~Ein, D.~Erman, and R.~Lazarsfeld,
{\em Asymptotics of random Betti tables}, (2012),
arXiv:1207.5467.


\bibitem{ein-lazarsfeld-asymptotic}
L.~Ein and R.~Lazarsfeld,
Asymptotic syzygies of algebraic varieties (2011),
arXiv:1103.0483.


\bibitem{eis-syzygies}
D.~Eisenbud, {\em The geometry of syzygies}, Graduate Texts in Mathematics, 
vol. 229, Springer-Verlag, New York 2005.

\bibitem{ees-filtering}
D.~Eisenbud , D.~Erman, and F.O.~Schreyer,
{\em Filtering free resolutions}, Compositio Math.~{\bf 149} (2013), 754--772.


\bibitem{eis-schrey1}
D.~Eisenbud and F.O.~Schreyer, {\em Betti numbers of graded modules and cohomology of vector bundles},
J. Amer. Math. Soc., {\bf 22} (2009), no.~3, 859--888.

\bibitem{eis-schrey-icm}
D.~Eisenbud and F.O.~Schreyer,
{\em Betti numbers of syzygies and cohomology of coherent sheaves},
Proceedings of the {I}nternational {C}ongress of {M}athematicians (2010), Hyderabad, India.

\bibitem{floystad-expository}
G.~Fl{\o}ystad,
{\em Boij-S\"oderberg theory: Introduction and survey},
Progress in commutative algebra 1, 1Ð54, de Gruyter, Berlin, 2012.

\bibitem{green-koszul-1}
M.L.~Green,
{\em Koszul cohomology and the geometry of projective varieties},
J. Differential Geom. {\bf 19} (1984), no.~1, 125--171.

\bibitem{green-lazarsfeld}
M.~Green and R.~Lazarsfeld,
{\em On the projective normality of complete linear series on an algebraic curve},
{Invent. Math.} {\bf 83} (1985), no.~1, 73--90.

\bibitem{huneke-miller}
C.~Huneke and M.~Miller, {\em A note on the multiplicity of Cohen-Macaulay algebras with pure resolutions},
{Canad. J. Math.} {\bf 37} (1985), no.~6, 1149--1162.

\end{thebibliography}
\end{document}